\theoremstyle{plain}
\newtheorem{lemma}{Lemma}[section]
\newtheorem{corollary}[lemma]{Corollary}
\newtheorem{maintheorem}{Theorem}
\theoremstyle{definition}
\newtheorem{remark}[lemma]{Remark}
\newtheorem{definition}[lemma]{Definition}
\newcommand{\NN}{\mathbb{N}}
\DeclareMathOperator{\supp}{supp} 
\DeclareMathOperator{\dist}{dist}
\title{Every expanding measure has the nonuniform specification property}
\author{Krerley Oliveira
\thanks{Work partially supported by CNPq, CAPES, FAPEAL,
INCTMAT and PRONEX.}}
\date{ }
\begin{document}

\maketitle

\begin{abstract}

Exploring abundance and non lacunarity of hyperbolic times for
endomorphisms preserving an ergodic probability with positive Lyapunov
exponents, we obtain that there are periodic points of period
growing subexponentially with respect to the lenght of almost
every dynamical ball. In particular, we conclude that any ergodic
measure with positive Lyapunov exponents satisfy the nonuniform
specification property.

As consequences, we (re)-obtain estimates on the recurrence to a
ball in terms of the Lyapunov exponents and we prove that any
expanding measure is limit of Dirac measures on periodic points.
\end{abstract}

\section{Introduction}
A basic problem in Dynamical Systems and its Applications is to study the existence and abundance of periodic points and  understand its distribution on the underlying space. In simple terms, for a differentiable map $f$ in a Riemman manifold $M$, ones want to analyse which conditions on $f$ and its derivatives ensure the existence of periodic points and how they are distributed in $M$.  

 For uniformly hyperbolic maps, Bowen stablished in \cite{Bo72} that the asymptotical exponential growth of the set $P_n(f)$ of periodic points of period $n$ was
determined by the topological entropy:
$$
\lim\limits_{n\rightarrow \infty} \frac{\log P_n(f)}{n}=h_{top}(f). 
$$   
He introduced the important  notion of \emph{specification} by periodic orbits and  proved a number of important results concerning the
uniqueness and the ergodic properties of equilibrium states, asymptotic growth and the limit distribution of periodic orbits and so on, for Axiom A diffeomorphisms
and flows (\cite{Bo72}).  In words, a system has the specification property if  a (small) error is fixed, given a piece of orbit of size $n$ there exists a periodic points that follows (up to this error) this orbit up to the moment $n$ and has period $n+K$, where $K$ depends only on the error and do not depend on $n$. 

More precisely, we say that
$f$ has the {\em specification property} if there exists
$\epsilon_0>0$ such that for all $x \in M$ and $0 <
\epsilon < \epsilon_0$, $n \leq 1$, there exists a periodic point $p\in M$
such
\begin{itemize}
\item $d(f^i(p), f^i (x))<\epsilon$, for $i=0,\dots,n$
\item $p$ has period less than $n + K$, with $K=K(\epsilon)$
depending only on $\epsilon$.
\end{itemize}

For convenience, we define the \emph{dynamical ball} of size $\epsilon$ and lenght $n$ by
\begin{equation}\label{eq:01}
B_n(x,\epsilon)= \bigcap_{k=0}^{n} f^{-k}(B(f^k(x),\epsilon)
\end{equation} and this can be rephrased just saying that there exists a periodic point of period $n+K(\epsilon)$ in $B_n(x, \epsilon).$
\newpage

  Beyond the uniformly hyperbolic setting, the understanding of periodic orbits and its structure is much less developed.  A remarkable work stablishing connections between Lyapunov exponents of a given measure and  periodic points  and its distribution was obtained by Katok in \cite{Ka80}. There, he proved a technical lemma known as \emph{Katok Closing Lemma}: roughly, it tell us that  if $f$ is $C^{1+\alpha}$ diffeomorphism and $x$ is a recurrent point in a Pesin's block such that $f^n(x)$ is in the same Pesin's block, the orbit of $x$ up to the moment $n$ can be shadowed by a periodic point $p$ with period depending only on the shadowing constant and the Pesin block, not on $n$ (see \cite{Ka80}, Section 3 for precise statements). Recent improvements of this result include the papers \cite{SuTi10, LLS09}.
  
  Here, we improve part of this results relaxing the hypothesis in \cite{Ka80} and  obtaining a quatitative version of Katok's Closing Lemma for non-uniformly expanding maps preserving a ergodic measure with positive exponents.   We study the extension of the Bowen's specification property in a measure-theoretical setting.  
  
  Consider an ergodic invariant measure $\mu$ with only positive Lyapunov exponents for any $C^1$ endomorphisms $f$ with non-flat critical set. We are able to show that for $\mu$ almost
every point, given a natural number $n$ and $\epsilon>0$ there
exists a periodic point $p$ on the dynamical ball
$B_n(x,\epsilon)$ with period $K(n,\epsilon)$ that
growth assymptotic like $n$ at infinity.

This generalize the Katok´s  Closing Lemma  in
several ways. First, we are able to deal with $C^1$ maps, instead
$C^{1+\alpha}$ maps, since we do not need to make use of Pesin´s
theory. Moreover, we are able not only to prove the existence of a
shadowing point but to obtain quantitative estimates on its
period. As consequence of our result, we are able to obtain estimates on Poincaré recurrence in terms of the Lyapunov exponents. Let us describe it in detail.   
    
The study of recurrence and return times are among the most
prolific tools for better understanding of statistical
properties of dynamical system. The most basic concept in this context is the {\em Poincaré recurrence of a set}.
Given a measurable dynamical system $(M,\mu, f)$ and a measurable
set $A \subset M$,
we define its Poincaré recurrence as
\[
\tau(A) := \inf \{ n \in \NN; f^n(A) \cap A \neq  \emptyset\}.
\]

In the literature, many relations have been established between recurrence times
and other important aspects of dynamical systems such as
entropy, Hausdorff dimension, mixing properties  and Lyapunov exponents.

In order to grasp a finer understanding of these relations it is
useful consider return times associated to shrinking neighborhoods
such as decreasing sequences of balls or cylinders. 

Closely related to the Poincaré recurrence of $B_n(x,\epsilon)$ we may
ask for the existence of periodic points in this dynamical ball.
Sometimes it is possible to find a periodic point $z$ in a given
ball but often its period is unrelated to $n$ or $\epsilon$. In
the scenario we are facing to, we can restrict our attention to
investigate how frequently dynamical balls contain periodic points
of (at some extent) controlled period. This gives place to a
notion of specification that  bounces back to Bowen itself \cite{Bo72, Bo74}. 

In \cite{Sa03} it is presented the nonuniform counterpart of these
notions allowing that the radius $\epsilon$ in Equation~(\ref{eq:01})
decrease with $n$. To begin with, let $q:M \to [0,\infty)$ be a
$\eta$-slowly varying function, that is to say, a function
satisfying \;$q(f(x)) \leq e^\eta q(x)$, for all $x \in M$ and
some fixed $\eta > 0$. A $(n,\epsilon,q)$ {\em nonuniform
dynamical ball} is defined as
\[
\widetilde{B}_n(x,\epsilon)= \bigcap_{k=0}^{n} f^{-k}(B(f^k(x),\epsilon q(f^k(x))^{-2})).
\]
And we say that $(f,\mu)$ has the {\em nonuniform specification
property} if for $\mu$-almost everywhere, given a $\eta$-slowing
varying function $q$,  the ball $\widetilde{B}_n(x,\epsilon)$
contains a periodic point whose period is less than $n +
K(n,\epsilon,\eta)$ satisfying
\[
 \lim_{\eta \to 0}\,\limsup_{n \to \infty} \, \frac{K(n,\epsilon,\eta)}{n} = 0.
\]

In the same work Saussol et al \cite{Sa03} proved that the
nonuniform specification property implies an estimate of the
recurrence time for arbitrary positive $\mu$-measure sets in terms
of Lyapunov exponents.

Here we are able to obtain a more natural result showing that
positiveness of all Lyapunov exponents implies the nonuniform
specification property. The main idea that we use to obtain this result,  is the notion of
hyperbolic time. This notion has been used by many authors to
obtain statistical properties of dynamical systems, such as
existence and uniqueness of SRB measures (\cite{ABV00}), stochastic stability (\cite{AA03}), infinite Markov Partitions (\cite{Pi09}) and equilibrium states
(\cite{Ol03, OV08}). Along this paper, we prove that almost
every point with respect to any ergodic measure with positive
exponents has a nonlacunary sequence of hyperbolic times. Let us state our  main result:

\begin{maintheorem}\label{th:t1}
Let $f: M \to M$ be a  $C^1$ map with non-flat
critical set $\mathcal{C}$. Any $f$-invariant expanding
measure $\mu$ satisfies the {\em nonuniform specification
property}.
\end{maintheorem}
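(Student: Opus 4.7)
The plan is to combine the theory of \emph{hyperbolic times} with Poincar\'e recurrence to produce the shadowing periodic orbit, and then verify the nonuniform radius condition by trading backward contraction against the slow growth of $q$. For $\mu$-a.e.\ $x$, Pliss's lemma applied to the Birkhoff averages of $\log\|Df^{-1}\|$ (which converge to $-\sum_i\lambda_i<0$ by Oseledets and positivity of exponents; integrability follows from non-flatness of $\mathcal{C}$) furnishes constants $\sigma\in(0,1)$, $\delta_1>0$, and a sequence of $\sigma$-hyperbolic times $n_1<n_2<\cdots$ at which a well-defined inverse branch $\phi_n$ of $f^n$ on $B(f^n(x),\delta_1)$ satisfies $d(f^j(\phi_n(y)),f^j(x))\le\sigma^{n-j}\delta_1$ for every $0\le j\le n$ and $y\in B(f^n(x),\delta_1)$.

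The heart of the argument is a \emph{non-lacunarity} statement: $n_{k+1}/n_k\to 1$ for $\mu$-a.e.\ $x$. Pliss gives positive lower density of $(n_k)$, and a Pliss-plus-Birkhoff argument in the ergodic setting upgrades this to the existence of an exact limit $\lim_N |\{k:n_k\le N\}|/N=\theta>0$; evaluating at $N=n_k$ yields $k/n_k\to\theta$, whence $n_{k+1}/n_k\to 1$. Consequently, for every $\alpha>0$ and $\mu$-a.e.\ $x$ there is $n_0(x,\alpha)$ such that every $n\ge n_0$ admits a hyperbolic time in $[n,(1+\alpha)n]$.

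Fix $x$ in the above full-measure set, also assumed to lie in $\supp\mu$ and to be Birkhoff generic. Given $\epsilon,\eta>0$, a $\eta$-slowly varying $q$, and $n$ large, set $c=2/\log(1/\sigma)$. Returns of $x$ to $B(x,\delta_1/2)$ have positive density $\mu(B(x,\delta_1/2))$ by Birkhoff, hyperbolic times have positive density $\theta$, and by non-lacunarity the window $[(1+c\eta)n,(1+2c\eta)n]$ contains a positive proportion of hyperbolic times; a density-and-intersection argument then extracts a hyperbolic time $N$ in this window with $f^N(x)\in B(x,\delta_1/2)$. Then $\phi_N$ maps $B(f^N(x),\delta_1)$ into itself, and Banach's fixed-point theorem yields a periodic point $p=\phi_N(f^N(p))$ of period $N=n+K$ with $K\in[c\eta n,2c\eta n]$.

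For $0\le j\le n$ we have $d(f^j(p),f^j(x))\le\sigma^{N-j}\delta_1$, while $q(f^j(x))^{-2}\ge e^{-2\eta j}q(x)^{-2}$ by slow variation. The binding constraint at $j=n$ reads $\sigma^K\delta_1\le\epsilon e^{-2\eta n}q(x)^{-2}$, ensured for $n$ large by $K\ge c\eta n$; at smaller $j$ the required bound is looser, so $p\in\widetilde B_n(x,\epsilon)$. Since $K\le 2c\eta n$, $\limsup_n K(n,\epsilon,\eta)/n\le 2c\eta\to 0$ as $\eta\to 0$. The principal obstacle is the simultaneity step: producing a hyperbolic time in the tight window $[(1+c\eta)n,(1+2c\eta)n]$ that is \emph{also} a return to $B(x,\delta_1/2)$. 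This is where non-lacunarity, combined with an equidistribution-or-density argument for hyperbolic times along the orbit, is essential, and where the positive-Lyapunov-exponent hypothesis is genuinely used.
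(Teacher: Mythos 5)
Your proof takes a genuinely different route from the paper's. You close the orbit via Poincar\'e recurrence to a small ball $B(x,\delta_1/2)$ and the Banach contraction principle applied to a hyperbolic inverse branch --- the classical Katok-closing scheme --- whereas the paper exploits \emph{strong transitivity on $\supp(\mu)$}, which is built into the definition of an expanding measure, to cover the hyperbolic pre-ball after a uniformly bounded number $J\leq N(\epsilon)$ of further iterates, and then closes with Brouwer's fixed-point theorem. The paper also passes to a fixed iterate $g=f^\ell$ and derives nonlacunarity from the $\mu$-integrability of the first-hyperbolic-time function together with a Borel--Cantelli argument, not from an exact-density statement.

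Two of your steps are genuine gaps. First, the nonlacunarity claim: Pliss's lemma gives only a positive \emph{lower} density of hyperbolic times, and the event ``$n$ is a hyperbolic time for $x$'' is a condition on the whole orbit segment $x,f(x),\dots,f^n(x)$, not of the form $\chi_A\circ f^n$ for a fixed set $A$; so Birkhoff's theorem does not directly upgrade positive lower density to an exact limit $\lim_N |\{k:n_k\le N\}|/N$, and your deduction $n_{k+1}/n_k\to 1$ is unsupported as written. (The paper circumvents this by proving $\int n_1\,d\mu<\infty$ via an induced-map estimate and then applying Borel--Cantelli.) Second, and more fundamentally, the simultaneity step you flag does not follow from a ``density-and-intersection argument'': two subsets of $\mathbb{N}$ of positive density can be disjoint, and nothing you cite forces the hyperbolic-time set and the return-time set to intersect inside the narrow window $[(1+c\eta)n,(1+2c\eta)n]$. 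What is needed there is something with the force of Katok's closing lemma (recurrence to a Pesin-type block that is simultaneously a hyperbolic time), which is a substantial theorem in itself. The paper sidesteps this entirely --- it never requires a hyperbolic time that is \emph{also} a return to $B(x,\delta_1/2)$ --- precisely because strong transitivity provides a covering iterate of bounded length after a single hyperbolic time past $n$. Since you never invoke the strong transitivity hypothesis, you are in effect attempting to prove a strictly stronger statement, and the unclosed simultaneity step is exactly the price of that extra generality.
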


See Section~\ref{preliminaries} for precise definitions. Using Theorem~\ref{th:t1} above and the Theorem of \cite{Sa03}, we
obtain that:

\begin{corollary}\label{c:main}
Let $f: M \to M$ be a  $C^1$ map with non-flat
critical set $\mathcal{C}$, preserving an $f$-invariant expanding
measure $\mu$. For $\mu$ almost every $x\in M$
\[
 \limsup_{r \to 0} \frac{ \tau(B(x,r)) }{ - \log r  } \leq \frac{1}{\lambda_1}.
\]

Moreover, if we assume that $h_\mu(f)>0$ the following inequality also holds true
$$
\frac{1}{\lambda_n} \leq \liminf_{r \to 0} \frac{ \tau(B(x,r)) }{ - \log r  }.
$$
\end{corollary}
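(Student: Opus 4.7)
The plan is to obtain both inequalities as an essentially immediate consequence of Theorem~\ref{th:t1} combined with the main theorem of Saussol, Troubetzkoy and Vaienti~\cite{Sa03}. That paper shows, in an abstract measure-theoretic setting, that any ergodic $f$-invariant measure satisfying the nonuniform specification property automatically obeys, for $\mu$-almost every $x$, the upper bound $\limsup_{r\to 0}\tau(B(x,r))/(-\log r)\le 1/\lambda_1$ and, under the additional hypothesis $h_\mu(f)>0$, the matching lower bound $\liminf_{r\to 0}\tau(B(x,r))/(-\log r)\ge 1/\lambda_n$. Theorem~\ref{th:t1} is precisely what is needed to verify their hypothesis in our setting: it asserts that every $f$-invariant expanding measure $\mu$ enjoys nonuniform specification. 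Thus the proof reduces to a one-line argument: apply Theorem~\ref{th:t1} to produce nonuniform specification, and then feed this into the theorem of~\cite{Sa03} to read off both inequalities.

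The heuristic explaining why the rates take the form $1/\lambda_1$ and $1/\lambda_n$ is the following. Given $\epsilon>0$ and an $\eta$-slowly varying function $q$, nonuniform specification yields, for each $n$, a periodic point $p_n\in\widetilde{B}_n(x,\epsilon)$ of period $N_n\le n+K(n,\epsilon,\eta)$ with $K(n,\epsilon,\eta)/n\to 0$ as $n\to\infty$ and $\eta\to 0$. Because along the orbit of a $\mu$-typical $x$ the infinitesimal expansion is sandwiched between the extreme Lyapunov exponents, the nonuniform dynamical ball $\widetilde{B}_n(x,\epsilon)$ can be compared with genuine metric balls $B(x,r)$ whose radii shrink exponentially at rates controlled by the Lyapunov spectrum. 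The periodic orbit of $p_n$ transports this picture into a statement about Poincar\'e recurrence into metric balls, with recurrence time $\sim n$ against a logarithmic size $-\log r$ of the ball of order $n\lambda_{\bullet}$, producing the two rates. The hypothesis $h_\mu(f)>0$ enters only to make the lower bound nontrivial in the framework of~\cite{Sa03}.

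The main obstacle is not substantive but organisational: all the genuine analytic work is absorbed into the two inputs, and the only point requiring care is to check that our conventions (the definition of $\eta$-slowly varying function, of the nonuniform dynamical ball $\widetilde{B}_n(x,\epsilon)$, and the form of the asymptotic $\lim_{\eta\to 0}\limsup_{n\to\infty}K(n,\epsilon,\eta)/n=0$) coincide with those used in~\cite{Sa03}, and that the hypothesis $h_\mu(f)>0$ appearing in their lower bound is the one we invoke here. Once these conventions are aligned, which is immediate from the definitions recalled in the introduction and in Section~\ref{preliminaries}, both inequalities follow without further argument.
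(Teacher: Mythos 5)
Your proposal is correct and matches the paper's own treatment exactly: the paper offers no separate proof of Corollary~\ref{c:main} beyond the remark preceding it ("Using Theorem~\ref{th:t1} above and the Theorem of \cite{Sa03}, we obtain that:"), i.e.\ one applies Theorem~\ref{th:t1} to get nonuniform specification and then cites the recurrence-time estimate of \cite{Sa03}. The additional heuristic paragraph and the remark about aligning conventions are reasonable commentary but do not change the argument.
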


It is interesting to observe that the inequalities presented in
Corollary \ref{c:main} may be attained, but there are examples
where these inequalities are strict. In fact, there exists a
linear expanding map on the two dimensional torus preserving the
Lebesgue measure and with Lyapunov exponents $0<\lambda_1
<\lambda_2$ such that
$$
\frac{1}{\lambda_2}<\frac{2}{\lambda_1+\lambda_2}=\lim_{r \to 0} \frac{ \tau(B(x,r)) }{ - \log r  } <\frac{1}{\lambda_1},
$$ for Lebesgue almost every point $x \in M$.
See Section 4 of \cite{Sa03}, for further details and some other examples.

Another interesting consequence of Theorem~\ref{th:t1} above, is
the fact that every expanding measure is approximated in the
weak$^{\star}$ topology by Dirac measures at periodic points.

\begin{corollary} Let $f: M \to M$ be a  $C^1$ map with non-flat critical set
$\mathcal{C}$, preserving an $f$-invariant expanding measure $\mu$.
Then,  $\mu$ is
the weak$^{\star}$ limit of Dirac measures at periodic points.
\end{corollary}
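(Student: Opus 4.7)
The plan is to use Theorem~\ref{th:t1} to produce, for a Birkhoff-generic point $x$, a sequence of periodic orbits that shadow longer and longer initial segments of the orbit of $x$, and then to verify that the uniform measures supported on those periodic orbits converge weak-$\star$ to $\mu$. Choosing $q \equiv 1$ in the definition of nonuniform specification (it is $\eta$-slowly varying for every $\eta > 0$) collapses the nonuniform ball $\widetilde{B}_n(x,\epsilon)$ to the standard Bowen ball $B_n(x,\epsilon)$, so the specification property produces genuine shadowing of $f^i(x)$ by $f^i(p)$ within $\epsilon$ for $0 \le i \le n$.

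Fix a countable dense set $\{\phi_k\} \subset C^0(M)$ and, by ergodicity of $\mu$, a point $x$ in the full-$\mu$-measure set on which Theorem~\ref{th:t1} applies \emph{and} Birkhoff's ergodic theorem holds for every $\phi_k$. Given $\phi \in C^0(M)$ and $\delta > 0$, uniform continuity supplies $\epsilon > 0$ with $d(y,z) < \epsilon \Rightarrow |\phi(y) - \phi(z)| < \delta$; then I pick $\eta$ with $\limsup_n K(n,\epsilon,\eta)/n < \delta$, and Theorem~\ref{th:t1} yields, for all large $n$, a periodic point $p_n \in B_n(x,\epsilon)$ of period $N_n \le (1+\delta)n$. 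The split
\[
\frac{1}{N_n}\sum_{i=0}^{N_n-1}\phi(f^i(p_n)) = \frac{n}{N_n}\cdot\frac{1}{n}\sum_{i=0}^{n-1}\phi(f^i(p_n)) + \frac{1}{N_n}\sum_{i=n}^{N_n-1}\phi(f^i(p_n))
\]
has tail bounded by $\|\phi\|_\infty \delta$, while shadowing and Birkhoff identify the main term with $\int \phi\, d\mu$ up to $O(\delta)$.

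A diagonal argument over the dense family $\{\phi_k\}$ and a sequence $\delta_m \downarrow 0$ then produces a single sequence of periodic points $p_m$ whose uniform Dirac measures $\nu_{p_m}$ satisfy $\int \phi_k\, d\nu_{p_m} \to \int \phi_k\, d\mu$ for every $k$, and density of $\{\phi_k\}$ in $C^0(M)$ delivers the weak-$\star$ convergence $\nu_{p_m} \to \mu$. The main technical point is synchronizing the four parameters $\epsilon, \eta, n, \delta$ so that the shadowing error and the period excess $K/n$ vanish together, which is exactly what the limit condition $\lim_{\eta \to 0} \limsup_n K(n,\epsilon,\eta)/n = 0$ in the definition of nonuniform specification was designed to ensure.
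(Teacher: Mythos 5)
Your approach is correct and is the natural one: the paper states this corollary without a proof, treating it as an immediate consequence of Theorem~\ref{th:t1}, and your argument fills in the standard details. Choosing $q \equiv 1$ is $\eta$-slowly varying for every $\eta > 0$ and indeed collapses $\widetilde{B}_n(x,\epsilon)$ to the Bowen ball $B_n(x,\epsilon)$; fixing a Birkhoff-generic $x$, shadowing, and the vanishing excess-period condition are exactly the right ingredients.

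One imprecision to tighten. You write that $p_n$ has ``period $N_n \le (1+\delta)n$'' and then split the average $\frac{1}{N_n}\sum_{i=0}^{N_n-1}\phi(f^i(p_n))$ into a main piece over $[0,n)$ and a tail over $[n,N_n)$; this decomposition only makes sense if $N_n \ge n$, and the theorem statement gives you a periodic point of period \emph{less than} $n+K$, allowing in principle a short minimal period. The clean fix, which follows from the proof of the preceding lemma rather than from the bare statement, is to observe that the periodic point is produced as a fixed point of $f^{n+K}$ by Brouwer, so its minimal period $m$ divides $N_n := n+K(n,x,\epsilon,\eta)$; consequently the invariant probability on the orbit of $p_n$ equals $\frac{1}{N_n}\sum_{i=0}^{N_n-1}\delta_{f^i(p_n)}$ regardless of the value of $m$, and $N_n \ge n$ holds automatically. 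With that interpretation of $N_n$ your split, the tail bound $\frac{N_n-n}{N_n}\norm{\phi}_\infty < \delta\norm{\phi}_\infty$, and the Birkhoff identification of the main term all go through. The diagonal step should also be stated as: for each $m$, take the minimum of the $\epsilon$'s and $\eta$'s needed for $\phi_1,\dots,\phi_m$, then choose $n$ large enough that both the Birkhoff averages of those $m$ test functions along $x$ and the ratio $K/n$ are within $1/m$ of their limits; the rest is as you say.
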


We point out that in \cite{Va10} the author make use of the notion of nonuniformly specification to establish bounds for the measure of deviation sets associated
to continuous observables with respect to not necessarily invariant weak Gibbs
measures. Under some mild assumptions, he obtained upper and lower bounds
for the measure of deviation sets of some nonuniformly expanding maps, in-
cluding quadratic maps and robust multidimensional nonuniformly expanding
local diffeomorphisms. To describe more precisely the notions in this introduction, we recall
some basic notions in Ergodic Theory in the next section.

\section{Preliminaries}\label{preliminaries}

By a classical theorem due to Oseledets (\cite{Os68}), given a $f$-invariant
measure $\mu$, for almost every point  $x \in M$ there exists an $f$-invariant (measurable)
splitting
$$
E_1(x)\oplus \dots \oplus E_{k(x)}(x)=T_x M
$$
and numbers $\overline{\lambda}_1(x)< \overline{\lambda}_2(x)< \dots
<\overline{\lambda}_{k(x)}(x)$ such that given $v \in E_i(x)\setminus\{0\}$, we have that
$$
\overline{\lambda}_i(x)=\lim_{n\rightarrow \infty} \frac{1}{n} \log
\|Df^n(x)v\|.
$$ Note that the last expression implies that each $\overline{\lambda}_i$ is an $f$-invariant function, i.e.,
$\overline{\lambda}_i(x)=\overline{\lambda}_i(f(x))$. In
particular, if $\mu$ is ergodic the functions
$\overline{\lambda}_i$ are constant almost everywhere. Let
$$
\lambda_1(x) \leq \lambda_2(x) \leq \dots\leq \lambda_d(x)
$$ be
the numbers $\overline{\lambda}_j(x)$, in a non-decreasing order,
and each repeated with multiplicity
 $\text{dim} (E_i(x))$. These numbers are called \emph{the Lyapunov exponents} of $f$ at the point $x$.

\begin{definition} A map $f: M \to M$  is {\em strongly
transitive} on a set $X$, if for any open set  $U \subset M$ such
that $U \cap X \neq \emptyset$,  there exists $n \in \mathbb{N}$
such that $X \subset \bigcup_{j=0}^n f^j(U)$.
\end{definition}

We recall that the \emph{support} of a invariant measure $\mu$ is the full measure set $\supp(\mu)$ of all points such any neighbourhood has positive measure. 
 
\begin{definition} We say that an ergodic $f$-invariant probability is  an
 \emph{expanding measure}
if all its Lyapunov exponents are positive and $f$ is strongly transitive on support of $\mu$.
\end{definition}

%
%

Let us present now a family of maps introduced in \cite{ABV00}.

\begin{definition} Given a $C^1$ map $f\colon M\to M$, we say that
$\mathcal{C}\subset M$ is  a $\beta$-{\em non-degenerate critical
set }\index{non-degenerate!critical set} if there exists $B>0$ such
that the following two conditions hold.
\begin{enumerate}
\item[(1)]
\quad $\displaystyle{\frac{1}{B}\dist(x,\mathcal{C})^{\beta}\leq
\frac {\|Df(x)v\|}{\|v\|}\leq B\,\dist(x,\mathcal{C})^{-\beta} },$
for all $v\in T_x M$.
\end{enumerate}
For every $x,y\in M\setminus\mathcal{C}$ with
$\dist(x,y)< \dist(x,\mathcal{C})/2$ we have
\begin{enumerate}
\item[(2)] \quad $\displaystyle{\left|\log\|Df(x)^{-1}\|-
\log\|Df(y)^{-1}\|\:\right|\leq
\frac{B}{\dist(x,\mathcal{C})^{\beta}}\dist(x,y)}$.

\item[(3)] \quad There exists $K>0$ such that $\|\det Df(x)\|<K\dist(x,\mathcal{C})^\beta$ for $x \in M\setminus \mathcal{C}$.

\item[(4)] \quad There exists $C>0$ such that $\|Df(x)\|<C$ for $x \in M\setminus \mathcal{C}$.

\end{enumerate}

\end{definition}

A map $f\colon M\to M$  will be called {\em non-flat} if $f$ is
local $C^1$ diffeomorphism in the whole manifold  except  in  a
$\beta$-{\em non-degenerate critical set }  $\mathcal{C}\subset
M$, for some $\beta$.

%
%
%


\section{Positive Exponents and Hyperbolic Times}

Throughout we assume that  $f: M \to M$ be a $C^1$
map with non-flat critical set $\mathcal{C}$ which preserves
an expanding ergodic invariant measure $\mu$. We also assume that $f$ is strongly transitive on the support of $\mu$. It is well-known (see \cite{Pe77}) that expanding measures admit

invariant unstable local manifolds at almost every point.
However, the dependence of these manifolds with the base
point is just measurable and several of its features are not suitable for computations.
In particular, the size of this local manifold is just a measurable
function and this is an additional challenge when we need to handle
these objects.
In the next definition, we introduce a concept that address some of these difficulties:

\begin{definition} \label{def:hiperbolic_times}
Given $c>0$ and $\delta>0$, we say that $n$ is a
\emph{$(c, \delta)$-hyperbolic time} for a point $x \in M$, if
we can find a neighborhood $V_n(x)$ of $x$
such that
$f^n : V_n(x) \rightarrow B_\delta(f^n(x))$ is a homeomorphism with the property that given $x_1, x_2 \in V_n(x)$, then
$$
\dist(f^k(x_1), f^k(x_2))\leq e^{-2c(n-k)}\dist(f^n(x_1), f^n(x_2)),
$$ for all $0\leq k\leq n$.
\end{definition}

In this context, we call $V_n(x)=f^{-n}\big (
B_\delta(f^n(x))\big)$ the {\em hyperbolic pre-ball} of length
$n$ and radius $\delta$ at $x$.

\begin{definition} We consider $H_n(c,\delta,f)$ the set of points $x$
such that $n$ is a hyperbolic time of $x$ and define
\[
H(c,\delta,f):=
\bigcap\limits_{m\geq 1} \bigcup\limits_{n\geq m} H_n(c,\delta,f)
\]
the set of points with infinitely many $(c,\delta)$-hyperbolic times for $f$.
\end{definition}

\begin{definition} Given $0<\theta<1$, let  $H^\theta(c,\delta,f)$ be the set of points with \emph{frequency} of $(c,\delta)$-hyperbolic times at least $\theta$:
\[
H^\theta(c,\delta,f) := \{x\in H(c,\delta,f);
\limsup\limits_{n\rightarrow \infty} \frac{\#\{k;\; 1\leq k\leq n,
x \in H_k(c,\delta,f)\}}{n}\geq \theta\}.
\]

\end{definition}

From \cite{ABV00}, we have a sufficient criterium for
abundance of $(c,\delta)$-hyper\-bolic times at a point $x$.
We say that $f$ is \emph{asymptotically  $c$-expanding} at a point $x$, if
\begin{equation}\label{cond.exp}
\limsup\limits_{n\rightarrow \infty} \frac{1}{n} \sum\limits_{i=0}^{n-1} \log\|Df(f^{i}(x))^{-1}\|^{-1}>4c.
\end{equation}

Furthermore, we say that the point $x$ satisfies the condition of
\emph{slow approximation} to the critical set if for all
$\epsilon>0$ there exists $\delta>0$ such that
\begin{equation}\label{cond.sa}
\limsup\limits_{n\rightarrow \infty} \frac{1}{n} \sum\limits_{i=0}^{n-1} -\log \text{dist}_\delta(f^i(x), \mathcal{C}) \leq \epsilon.
\end{equation}

Here, $\dist_\delta$ is the $\delta$-{\em truncated} distance:
\[
\dist_\delta(x, \mathcal{C}) := \dist(x, \mathcal{C}),\;\;\text{
whenever } \dist(x, \mathcal{C}) < \delta \text{ and } 1 \text{
otherwise }.
\]

\begin{lemma}\label{l.pliss}
There exists $\delta>0$ and $\theta>0$ depending only on $c$ and $f$,  such that if $x$ is a point satisfying Equations~(\ref{cond.exp}) and~(\ref{cond.sa}), then $x$ belongs to $H^\theta(c,\delta,f)$.
\end{lemma}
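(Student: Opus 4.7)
The plan is to deduce this from Pliss's Lemma applied to two sequences — the log-derivative gains $a_i := \log \|Df(f^i(x))^{-1}\|^{-1}$ and the truncated proximities $b_i := -\log \dist_\delta(f^i(x), \mathcal{C})$ to $\mathcal{C}$ — and then to combine the selected times with non-flatness of $\mathcal{C}$ in order to construct the hyperbolic pre-balls. Since condition~(4) of non-flatness gives $\|Df\| \leq C$, we have $a_i \leq \log C$ pointwise, so Pliss applies directly to $a_i$ without any truncation from above.

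The first Pliss application, combined with the expanding hypothesis~(\ref{cond.exp}), produces a positive-density set $\mathcal{N}_1$ of times $n$ at which $\sum_{i=k}^{n-1} a_i \geq 3c(n-k)$ for every $0 \leq k < n$. Next, fixing $\varepsilon > 0$ with $\varepsilon \ll c$, the slow approximation hypothesis~(\ref{cond.sa}) supplies a $\delta > 0$ with $\limsup \frac{1}{n}\sum b_i \leq \varepsilon$, and a second Pliss application to $2\varepsilon - b_i$ (which is automatically bounded above by $2\varepsilon$) produces a positive-density set $\mathcal{N}_2$ of times $n$ at which $\sum_{i=k}^{n-1} b_i \leq 3\varepsilon (n-k)$ for every $0 \leq k < n$. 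A standard lower-density argument shows that the intersection $\mathcal{N}_1 \cap \mathcal{N}_2$ still has positive lower density $\theta = \theta(c,f) > 0$.

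For each $n \in \mathcal{N}_1 \cap \mathcal{N}_2$ I would then inductively construct the hyperbolic pre-ball by pulling $B_\delta(f^n(x))$ back one step at a time: let $W_j$ denote the connected component of $f^{-j}(B_\delta(f^n(x)))$ containing $f^{n-j}(x)$. The expansion estimate forces $\operatorname{diam}(W_j) \leq 2\delta e^{-2cj}$, while the slow-recurrence estimate forces $\dist(f^{n-j}(x), \mathcal{C}) \geq \min(\delta, e^{-3\varepsilon j})$. Choosing $\varepsilon$ small enough that $3\varepsilon < 2c$, and $\delta$ small accordingly (depending only on $c$ and the non-flatness constants of $f$), each $W_j$ remains disjoint from $\mathcal{C}$, every inverse branch of $f$ is a well-defined local diffeomorphism, and the bounded-distortion estimate coming from condition~(2) of non-flatness upgrades the pointwise derivative contraction to the Lipschitz contraction demanded by Definition~\ref{def:hiperbolic_times}. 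Setting $V_n(x) := W_n$ finishes the verification.

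The main obstacle is the uniform choice of $\delta$: the size of the pre-ball is a priori dictated by the closest approach of the backward orbit to $\mathcal{C}$, which depends measurably on $x$. Making $\delta$ uniform is exactly what the interlocking of the two Pliss selections accomplishes — expanding times alone give no control over the critical set, while good-recurrence times alone give no contraction, so the essential point is that their intersection still carries a lower density bounded below independently of $x$.
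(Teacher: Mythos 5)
The paper's own ``proof'' of Lemma~\ref{l.pliss} is a single-line citation to Pliss's Lemma and to \cite[Corollary 3.2]{ABV00}, so what you are really reconstructing is the ABV argument, and your outline follows its structure: two Pliss applications (one for the derivative cocycle, one for the truncated distance), intersection of the resulting density sets, and then an inductive pull-back of $B_\delta(f^n(x))$ controlled by the bounded-distortion condition~(2). That is indeed the right skeleton, and your final paragraph correctly identifies the real content --- the interlocking of the two selections to make $\delta$ uniform.

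There is, however, a genuine gap in the density step. Intersecting two subsets of $\mathbb{N}$ of positive lower density gives a set of positive lower density \emph{only if} $\theta_1+\theta_2>1$; otherwise the intersection can be empty. Your first Pliss application yields $\theta_1=\frac{4c-3c}{\log C-3c}=\frac{c}{\log C-3c}$, which is typically well below $1/2$. Your second Pliss application is made to the sequence $2\varepsilon-b_i$, which is bounded above by $A=2\varepsilon$ and has average $\ge\varepsilon$; Pliss then gives density at most $(\varepsilon-c_1)/(2\varepsilon-c_1)<1/2$ for any admissible threshold $c_1>0$. So no matter how small you take $\varepsilon$, the choice $2\varepsilon-b_i$ caps $\theta_2$ at $1/2$, and $\theta_1+\theta_2>1$ cannot be guaranteed. (Your stated conclusion ``$\sum b_i\le 3\varepsilon(n-k)$'' also corresponds to a negative $c_1$, which is outside the scope of the lemma.) The standard fix, and what \cite{ABV00} effectively do, is to keep the slow-recurrence threshold $\rho$ in the definition of hyperbolic time fixed (depending only on $c$ and the non-flatness constants) and then take $\varepsilon\ll\rho$: applying Pliss to $\rho-b_i$ with average $\ge\rho-\varepsilon$ and upper bound $\rho$ gives density arbitrarily close to $1$ as $\varepsilon/\rho\to0$, so one can force $\theta_2>1-\theta_1$ and the intersection argument goes through. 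With this correction, and with the distortion loss in the pull-back estimated against the $3c-2c=c$ margin you reserved (which requires $\rho$ to be small relative to $c$ and the non-flatness exponent, not just $3\varepsilon<2c$), the sketch becomes a correct proof of the lemma in the spirit of the cited reference.
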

\begin{proof}
The result follows from Pliss' Lemma \cite{Pl71}. For details see \cite[Corollary 3.2]{ABV00}.
\end{proof}

%
%

Abundance of hyperbolic times is given by

\begin{lemma}\label{l.hyptimes} Given $\mu$ an ergodic expanding measure, there exists $\ell \in \mathbb{N}$, and real numbers $c, \theta, \delta>0$ such that
$$
\mu\big(H^\theta(c,\delta,f^{\ell})\big)=1.
$$
\end{lemma}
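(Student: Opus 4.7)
The plan is to verify the hypotheses of Lemma~\ref{l.pliss} at $\mu$-a.e.\ point for a suitable iterate $f^\ell$, and then invoke that lemma. I must exhibit $\ell\in\NN$ and $c>0$ so that for $\mu$-a.e.\ $x$ both the asymptotic $c$-expansion condition~\eqref{cond.exp} and the slow-approximation condition~\eqref{cond.sa} hold for $f^\ell$ relative to its singular set $\mathcal{C}_\ell:=\bigcup_{j=0}^{\ell-1}f^{-j}(\mathcal{C})$; non-flatness of $f^\ell$ relative to $\mathcal{C}_\ell$ is inherited from conditions (1)--(4) via the chain rule.

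For the expansion, the starting point is $\lambda_1>0$. The cocycle $\psi_n(x):=\log\|Df^n(x)^{-1}\|^{-1}$ is superadditive over $f$ since $m(AB)\ge m(A)m(B)$ for the conorm, so by Kingman's theorem $\psi_n/n\to\lambda_1$ $\mu$-a.s., and the integrals $a_n:=\int\psi_n\,d\mu$ are superadditive with $a_n/n\nearrow\lambda_1$ by Fekete. Fix any $c\in(0,\lambda_1/4)$. The measure $\mu$ decomposes into finitely many $f^\ell$-ergodic components $\nu$, cyclically permuted by $f$ and all sharing the Lyapunov spectrum $\ell\lambda_1,\dots,\ell\lambda_d>0$. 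Choosing $\ell$ large enough (and, if necessary, enlarging to a common multiple) so that $\int\psi_\ell\,d\nu>4c$ on every component, Birkhoff's ergodic theorem for each ergodic $(f^\ell,\nu)$ yields
\[
\lim_{n\to\infty}\frac{1}{n}\sum_{i=0}^{n-1}\psi_\ell\bigl((f^\ell)^i(x)\bigr)>4c
\]
at $\nu$-a.e.\ (hence $\mu$-a.e.) $x$, which is~\eqref{cond.exp} for $f^\ell$.

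For the slow-approximation condition, the key input is $\log\dist(\cdot,\mathcal{C})\in L^1(\mu)$. Condition~(4) bounds $\log|\det Df|\le d\log C$ from above, while Oseledets gives $\tfrac1n\log|\det Df^n|\to\sum_i\lambda_i$ $\mu$-a.s.; applying Birkhoff to the bounded positive part and comparing with Oseledets forces $(\log|\det Df|)^-\in L^1(\mu)$, so that $\int\log|\det Df|\,d\mu=\sum_i\lambda_i\in(0,\infty)$. Combining with condition~(3), $\log|\det Df|\le\log K+\beta\log\dist(\cdot,\mathcal{C})$, this yields $\int\log\dist(\cdot,\mathcal{C})\,d\mu\ge\bigl(\sum_i\lambda_i-\log K\bigr)/\beta>-\infty$, so $\log\dist(\cdot,\mathcal{C})\in L^1(\mu)$. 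Dominated convergence gives $\int-\log\dist_\delta(\cdot,\mathcal{C})\,d\mu\to 0$ as $\delta\to 0$, and the same holds with $\mathcal{C}_\ell$ in place of $\mathcal{C}$ by using $\|Df\|\le C$ to control $\dist(x,\mathcal{C}_\ell)$ in terms of $\min_j\dist(f^j(x),\mathcal{C})$ together with $f$-invariance of $\mu$. Applying Birkhoff and taking $\delta$ small then delivers~\eqref{cond.sa} at $\mu$-a.e.\ point.

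With both hypotheses verified, Lemma~\ref{l.pliss} applied to $f^\ell$ produces $\theta>0$ and a full $\mu$-measure set contained in $H^\theta(c,\delta,f^\ell)$, which is the desired conclusion. The main obstacle I expect is the integrability $\log\dist(\cdot,\mathcal{C})\in L^1(\mu)$: unlike in~\cite{ABV00}, this is not assumed but must be extracted from positivity of all Lyapunov exponents together with the quantitative non-degeneracy~(3). A secondary subtlety is arranging the expansion estimate uniformly on all $f^\ell$-ergodic components of $\mu$, which is handled by the observation that these components share the same Lyapunov spectrum.
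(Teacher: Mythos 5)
Your plan is the same as the paper's: verify the two hypotheses of Lemma~\ref{l.pliss} for a high iterate $f^\ell$ and then invoke that lemma. What differs is how you verify each hypothesis, and in both cases you take a genuinely different route. For the slow-approximation condition~\eqref{cond.sa}, the paper simply cites Lemma~10.2 of~\cite{Pi09}; you instead extract the integrability of $\log\dist(\cdot,\mathcal{C})$ directly from the non-degeneracy conditions: the upper bound~(4) on $\|Df\|$ together with Oseledets forces $\log|\det Df|\in L^1(\mu)$ (otherwise the Birkhoff averages would tend to $-\infty$, contradicting the finite Lyapunov sum), and~(3) then transfers this to a lower $L^1$ bound on $\log\dist$. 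This is a clean, self-contained replacement for an external reference, and it also supplies the integrability of $\log\|Df^{-1}\|$ that the paper uses without comment. For the expansion condition~\eqref{cond.exp}, the paper runs a finitary argument: it introduces the sets $A_k=\{n_0\le k\}$, controls the integral of $\tfrac1k\log\|Df^k(\cdot)^{-1}\|$ on $A_k$ by $-8c$ and on $A_k^c$ by a Birkhoff sum that is uniformly integrable, and lets $k\to\infty$. You replace this with the superadditivity of $\psi_n=\log\|Df^n(\cdot)^{-1}\|^{-1}$ and Kingman/Fekete, which reaches the same conclusion $\int\psi_\ell\,d\mu>4\ell c$ more quickly.

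One point deserves flagging. Both you and the paper must deduce from the integral bound $\int\psi_\ell\,d\mu>4\ell c$ that the \emph{pointwise} Birkhoff average of $\psi_\ell$ under $g=f^\ell$ exceeds $4c$ at $\mu$-a.e.\ $x$. Since $\mu$ need not be $f^\ell$-ergodic, Birkhoff for $g$ only yields the conditional expectation $E[\psi_\ell\mid\mathcal I_g]$, whose integral equals $\int\psi_\ell\,d\mu$ but whose a.e.\ value is not automatically above $4c$. The paper passes over this silently (``using Birkhoff's Ergodic Theorem once more''), whereas you at least notice it and appeal to the fact that the $f^\ell$-ergodic components share the Lyapunov spectrum. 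However, your Fekete step does not directly transfer to a component $\nu$, because $\nu$ is only $f^\ell$-invariant and the superadditive chain $a_{m+n}\ge a_m+a_n$ needs $f$-invariance; the parenthetical ``enlarging to a common multiple'' is left unexplained and is not obviously sufficient, since changing $\ell$ changes the component structure. So this step is still a gap in your write-up, albeit one the paper's own proof shares, and your acknowledgment of it is a point in your favor.
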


\begin{proof}
We observe that since the Lyapunov exponent of $\mu$ is positive,
we may find a constant $c>0$ such that for almost every $x \in M$,
there exists $n_0(x) \in \mathbb{N}$ such that if $n \geq n_0(x)$
then $\|Df^n(x)v\| \geq e^{8cn}\|v\|$, for every $v \in T_x M$.
This is equivalent to $\|Df^n(x)^{-1}\|\leq e^{-8cn}$. Denote by
$A_k$ the set
$$
A_k=\{x \in M; n_0(x)\leq k\}.
$$
It is clear that $\mu(A_k^c)$ goes to zero when $k$ goes to
infinity. Observe that
\begin{equation}\label{eq.A}
\int_M \frac{1}{k} \log \|Df^k(x)^{-1}\|\,d\mu = \int_{A_k} \log
\frac{1}{k} \|Df^k(x)^{-1}\|\,d\mu + \int_{A_k^c} \log \frac{1}{k}
\|Df^k(x)^{-1}\|\,d\mu  \leq $$
$$ -8c\mu(A_k)+ \int_{A_k^c}
\frac{1}{k} \log \|Df^k(x)^{-1}\|\,d\mu.
\end{equation}

On the other hand,

$$
\int_{A_k^c} \frac{1}{k}
\log \|Df^k(x)^{-1}\|\,d\mu \leq \int_{A_k^c} \frac{1}{k}
\sum\limits_{i=0}^{k-1}\log \|Df(f^i(x))^{-1}\|\,d\mu.
$$

 Since $\log\|Df(x)^{-1}\|$ is integrable, by Birkhoff´s Ergodic Theorem, the function $h_k (x)= \frac{1}{k}
\sum\limits_{i=0}^{k-1}\log \|Df(f^i(x))^{-1}\|$ converges in
$L^1(d\mu)$ to some function $\varphi$. Using that  $\mu(A_k^c)$ goes
to zero when $k$ goes to infinity, we have that
$$
\lim_{k\rightarrow \infty} \int_{A_k^c} \frac{1}{k}\log
\|Df^k(x)^{-1}\|\,d\mu =0.
$$
Observing this and Equation~(\ref{eq.A}) above, there exists $l_0$ such
that if $\ell \geq  l_0$
$$
\int_M \frac{1}{\ell} \log \|Df^\ell(x)^{-1}\|\,d\mu < -4c<0.
$$
In particular, using Birkhoff´s Ergodic Theorem once more, the
Condition~(\ref{cond.exp}) holds for the function $f^\ell$ at almost every point $x \in M$. Since $\mu$ is an ergodic measure and  has  finite Lyapunov exponents with respect to $\mu$, we have by Lemma 10.2 of \cite{Pi09} that Condition~(\ref{cond.sa}) holds for $f^\ell$ at almost every point. Since Condition~\ref{cond.exp} and~\ref{cond.sa} are satisfied, we have by Lemma~\ref{l.pliss} that there exists $\theta>0$ such that almost every $x \in M$ belongs to $H^\theta(c,\delta,f^{\ell})$.

\end{proof}

We fix $\ell, c, \delta, \theta>0$ as in Lemma above,  and
consider $g=f^{\ell}$.  When we say just \emph{hyperbolic time},
we mean $(c,\delta)$-hyperbolic time with respect to $g$. Define the \emph{first hyperbolic time}  function $n_1:
H^\theta(c,\delta,g) \rightarrow \mathbb{N}$ setting  $n_1(x)$ as
the first hyperbolic time of $x$. 

\begin{remark}\label{r.1}
Observe that if $m$ is a
hyperbolic time for $x$ and $n$ is a hyperbolic
 time for $f^m(x)$,
then $n+m$ is a hyperbolic time for $x$. From this follows  that if
$n_1(x)<n_2(x)<\dots$
denotes the sequence of hyperbolic times of $x$, then
$$n_1 (f^{n_i(x)}(x))= n_{i+1}(x)-n_i(x).
$$
\end{remark}

For the next result we make use of \cite[Lemma 4.7]{Pi09}:

\begin{lemma}\cite[Lemma 4.7]{Pi09}\label{l.Pinheiro} Let $(G_j)_{j\in \mathbb{N}}$ be a collection of subsets of M such that  for all 
$x\in G_n$ and $0\leq j<n$ we have that $g^j(x)\in G_{n-j}$. Let $B\subset M$ and let $x \in B$ be a point such that 
$$
\#\{j\geq 1; x\in G_j\text{ and }g^j(x)\in B\}=+\infty.
$$ Consider $\mathcal{O}^+(x)$ the positive orbit of $x$ and let $T:\mathcal{O}^+ \cap B \rightarrow \mathcal{O}^+ \cap B$ the function defined by $T(y)=g^{\varphi(y)} (y)$, where $\varphi(y)=\min\{j\in \mathbb{N}; y \in G_j \text{ and } g^j(y)\in B\}$. Then, if 
$$ \limsup_n \frac{1}{n} \#\{1\leq j\leq n;x \in G_j \text{ and } g^j(x) \in B\} >\theta>0
$$ we have that
$$
\liminf\limits_{n\rightarrow \infty} \frac{1}{n} \sum\limits_{j=0}^{n-1} \varphi(T^j(x))\leq \theta^{-1}.
$$
\end{lemma}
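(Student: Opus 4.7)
The plan is to reduce the claim to a clean counting argument relating two sequences of times. First I would enumerate the ``good return times'' of $x$, namely the increasing sequence $R_1(x) < R_2(x) < \cdots$ consisting of all integers $j \geq 1$ with $x \in G_j$ and $g^j(x) \in B$; this is an infinite sequence by assumption, so the induced map $T$ is defined on all iterates of $\mathcal{O}^+(x) \cap B$. Writing $S_N(x) := \sum_{j=0}^{N-1} \varphi(T^j(x))$, we have $T^N(x) = g^{S_N(x)}(x)$, and the conclusion can be rephrased as $\liminf_{N} S_N(x)/N \leq \theta^{-1}$.

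The heart of the matter is the inequality $S_N(x) \leq R_N(x)$ for every $N \geq 1$, which I would prove by induction. For $N=1$, the definitions immediately give $S_1(x) = \varphi(x) = R_1(x)$. For the inductive step, assume $S_N(x) \leq R_N(x)$ and set $j := R_{N+1}(x) - S_N(x) \geq 1$. Since $R_{N+1}(x)$ is a good time for $x$, we have $x \in G_{S_N(x) + j}$, and the assumed downward hereditary property of $(G_m)_m$ yields $g^{S_N(x)}(x) \in G_j$; moreover $g^{S_N(x)+j}(x) = g^{R_{N+1}(x)}(x) \in B$. Thus $j$ is an admissible candidate in the minimization defining $\varphi(T^N(x)) = \varphi(g^{S_N(x)}(x))$, so $\varphi(T^N(x)) \leq j$, and adding $S_N(x)$ to both sides yields the required $S_{N+1}(x) \leq R_{N+1}(x)$.

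With this inequality in hand, the rest is a direct counting step. Setting $B_n := \#\{k \geq 1 : R_k(x) \leq n\}$, the standing hypothesis says $\limsup_{n} B_n/n > \theta$, so there is a subsequence $n_k \to \infty$ with $B_{n_k} > \theta n_k$. For each $n$, the inductive bound yields $S_{B_n}(x) \leq R_{B_n}(x) \leq n$; evaluating along the subsequence and using $B_{n_k} \to \infty$, we obtain
\[
\liminf_{N \to \infty} \frac{S_N(x)}{N} \leq \liminf_{k \to \infty} \frac{S_{B_{n_k}}(x)}{B_{n_k}} \leq \liminf_{k \to \infty} \frac{n_k}{B_{n_k}} < \theta^{-1},
\]
which is exactly the claimed inequality. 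The only delicate point in this plan is purely bookkeeping: in the inductive step one must invoke the downward hereditary property of $(G_j)_j$ with precisely the right indices so that $R_{N+1}(x) - S_N(x)$ appears as a legitimate competitor in the definition of $\varphi(T^N(x))$. Everything else is automatic once that index juggling is handled correctly.
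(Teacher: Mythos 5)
Your proof is correct. A few remarks.

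First, note that the paper does not actually prove this lemma; it cites \cite[Lemma~4.7]{Pi09} and uses it as a black box, so there is no in-paper proof to compare against. Your argument is a clean, self-contained reconstruction.

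The heart of your proof is the observation that if $R_1<R_2<\cdots$ are the good return times and $S_N=\sum_{j=0}^{N-1}\varphi(T^j(x))$, then $S_N\leq R_N$ for all $N$, proved by induction using the downward hereditary property of the $(G_j)$. Let me confirm the delicate index bookkeeping you flagged: at the inductive step $j:=R_{N+1}(x)-S_N(x)\geq R_{N+1}(x)-R_N(x)\geq 1$, $x\in G_{S_N(x)+j}$ implies $g^{S_N(x)}(x)\in G_j$ by the hereditary property, and $g^{S_N(x)+j}(x)=g^{R_{N+1}(x)}(x)\in B$, so $j$ is an admissible competitor for $\varphi(T^N(x))$. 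This is exactly right. It is worth noting that the inequality $S_N\leq R_N$ is in general \emph{strict}: the minimization defining $\varphi$ may produce a return to $B$ at a time $t$ for which $g^{S_{N}}(x)\in G_{t-S_N}$ holds but $x\in G_t$ fails, so the $T$-orbit can visit $B$ more often than the good-time sequence records. The argument only needs the one-sided bound, so this is harmless. The counting step is straightforward, with $B_n:=\#\{k:R_k(x)\leq n\}$ and evaluating $S_{B_n}\leq R_{B_n}\leq n$ along a subsequence with $B_{n_k}>\theta n_k$.

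One trivial slip: your final displayed chain ends with a strict inequality $\liminf_k n_k/B_{n_k}<\theta^{-1}$. Each term of the sequence is strictly less than $\theta^{-1}$, but the liminf may equal $\theta^{-1}$, so the last step should read $\leq\theta^{-1}$. Since the lemma's conclusion is also a $\leq$, this does not affect the validity of the proof.
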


Using  this lemma we are able to prove that

\begin{lemma}\label{lem:01}
 The first hyperbolic time function of $g$  is integrable:
\[
 \int n_1(x) d\mu < +\infty.
\]
\end{lemma}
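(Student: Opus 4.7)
The plan is to apply Lemma~\ref{l.Pinheiro} with $G_j = H_j(c,\delta,g)$ and $B = H^\theta(c,\delta,g)$, and then to upgrade the resulting orbit-wise bound to an $L^1$ estimate via Birkhoff's ergodic theorem on the induced system. First, the nesting hypothesis of Lemma~\ref{l.Pinheiro} is supplied by Remark~\ref{r.1}: if $n$ is a hyperbolic time for $x$ with pre-ball $V_n(x)$, then $g^j(V_n(x))$ witnesses that $n-j$ is a hyperbolic time for $g^j(x)$, so $x\in G_n$ forces $g^j(x)\in G_{n-j}$ for $0\le j<n$. By Lemma~\ref{l.hyptimes}, $\mu(B)=1$; combined with $g$-invariance of $\mu$, the set $B_0:=\bigcap_{j\ge 0}g^{-j}(B)$ of orbits remaining forever in $B$ has full measure. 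On $B_0$ the requirement ``$g^j(x)\in B$'' is automatic, so the function $\varphi$ of Lemma~\ref{l.Pinheiro} simplifies to $\varphi(x)=n_1(x)$, and the induced map becomes $T(y)=g^{n_1(y)}(y)$.

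For $\mu$-almost every $x\in B_0\cap H^\theta$ the frequency hypothesis of Lemma~\ref{l.Pinheiro} is satisfied, so the lemma delivers
\[
 \liminf_{k\to\infty}\frac{1}{k}\sum_{j=0}^{k-1}n_1(T^j(x))\le\theta^{-1}.
\]
A repeated application of Remark~\ref{r.1} shows that $T^j(x)=g^{n_j(x)}(x)$ and $n_1(T^j(x))=n_{j+1}(x)-n_j(x)$, so the Birkhoff sum above telescopes to $n_k(x)$; the Pinheiro bound thus reads $\liminf_k n_k(x)/k\le\theta^{-1}$ $\mu$-a.e.

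To promote this pointwise estimate to integrability, I apply Birkhoff's ergodic theorem to the induced system $(T,\nu)$, where $\nu$ is the $T$-invariant probability on $B_0$ produced by the Kac--Rokhlin tower construction with ceiling $n_1$, related to $\mu$ by the Kac identity $\int n_1\,d\nu=\mu(M)=1$. For each $N\ge 1$ the truncation $n_1^{(N)}:=\min(n_1,N)$ is bounded, and Birkhoff yields
\[
 \mathbb{E}_\nu\bigl[n_1^{(N)}\bigm|\mathcal{I}_T\bigr](x)=\lim_{k\to\infty}\frac{1}{k}\sum_{j=0}^{k-1}n_1^{(N)}(T^j(x))\le\liminf_k\frac{n_k(x)}{k}\le\theta^{-1}
\]
for $\nu$-a.e.\ $x$; integrating and passing to the monotone limit $N\to\infty$ gives $\int n_1\,d\nu\le\theta^{-1}<+\infty$, which is the integrability we sought. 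The main obstacle will be making the Kac--Rokhlin construction rigorous in the non-invertible setting, since $T$ is defined via the first hyperbolic time rather than as a first-return map and the tower projection $(x,j)\mapsto g^j(x)$ is not a priori injective; this is surmounted either by passing to the natural extension of $(g,\mu)$ before building the tower, or by an orbit-wise truncation argument that bounds $\int n_1^{(N)}\,d\mu$ at each level $N$ and then invokes monotone convergence.
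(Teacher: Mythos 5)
Your proposal takes the same overall route as the paper: invoke Lemma~\ref{l.Pinheiro} with $G_j = H_j(c,\delta,g)$ and $B = H^\theta(c,\delta,g)$, verify the nesting hypothesis through Remark~\ref{r.1}, note that $\varphi = n_1$ once returns to the full-measure set $B$ are automatic, and conclude $\liminf_k n_k(x)/k \le \theta^{-1}$ for $\mu$-a.e.~$x$. Up to this point the two arguments coincide. Where they diverge is in the upgrade from that pointwise Ces\`aro bound to the integral bound $\int n_1\,d\mu < +\infty$. The paper simply asserts this step follows ``by Birkhoff's Ergodic Theorem,'' without elaboration; you correctly identify that the Ces\`aro average is taken along orbits of the induced map $T = g^{n_1}$ rather than of $g$ itself, so Birkhoff for $(g,\mu)$ does not apply directly, and you try to repair this with a Kac--Rokhlin tower.

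The repair, however, has two genuine problems. First, the identity you invoke, $\int n_1\,d\nu = \mu(M) = 1$, is Kac's formula for a \emph{first-return} time, but $n_1$ is the first \emph{hyperbolic} time, not the first return time to $H^\theta(c,\delta,g)$; since that set has full measure, the first return time is a.e.~equal to $1$, which is certainly not $n_1$. So the identity does not apply, and indeed if it did hold you would already have $\int n_1\,d\nu = 1 < \infty$ with no need for the truncation-plus-Birkhoff argument that follows it --- the structure is internally inconsistent. Second, even granting the existence of a $T$-invariant ergodic probability $\nu$ on which Birkhoff can be applied, your argument produces $\int n_1\,d\nu \le \theta^{-1}$, whereas the lemma claims $\int n_1\,d\mu < \infty$. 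Because $T$ is not a first-return map, $\mu$ restricted to $B$ need not be $T$-invariant, so $\nu$ and $\mu$ are in general distinct measures, and no relation between them (such as a two-sided density bound) is established. As written, your proof therefore establishes integrability with respect to the wrong measure. To finish along these lines one would have to show either that $\mu$ itself is $T$-invariant in this inducing scheme, or that $\nu$ is equivalent to $\mu$ with density bounded away from zero, neither of which is addressed; alternatively one needs a direct argument controlling $\sum_m \mu(\{n_1 > m\})$ from the frequency statement $\mu\big(H^\theta(c,\delta,g)\big)=1$.
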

\begin{proof}
Observe that by Lemma~\ref{l.hyptimes}, the set $H^\theta(c,\delta,g)$ has full measure. Since $\mu$ is ergodic, by Birkhoff's Ergodic Theorem, to show the integrability of $n_1$ is enough to verify that  for every $x\in H^\theta(c,\delta,g)$ we have 
$$
\lim\limits_{n\rightarrow \infty} \frac{1}{n} \sum\limits_{j=0}^{n-1} n_1(g^{n_j(x)}(x))\leq \theta^{-1},
$$ where we set $n_0(x)=0$. Indeed, taking $G_j=H_j(c,\delta,g)$ and $B=H^\theta(c,\delta,g)$ as in Lemma~\ref{l.Pinheiro} above,  it follows from  Remark~\ref{r.1} that if $x\in G_n$ then $g^j(x)\in G_{n-j}$  for every $0\leq j<n$. Observe that by the definition, for every  $x\in H^{\theta}(c,\delta,g)$ we have that
$$ \limsup_n \frac{1}{n} \#\{1\leq j\leq n;x \in G_j \text{ and } g^j(x) \in B\} >\theta>0.
$$ At last, we have that $\varphi(x)=\min\{j\in \mathbb{N}; y \in H_j(c,\delta,g) \text{ and } g^j(x)\in B\}=n_1(x)$. Thus, the inequation above follows straight forwardly from Lemma~\ref{l.Pinheiro}.       

\end{proof}

An increasing sequence $(a_k)_{k\in \mathbb{N}}$ of natural numbers
is called \emph{nonlacunary}, if
$$
\lim\limits_{k\rightarrow \infty} \frac{a_{k+1}}{a_k} =1.
$$
In \cite{OV08}, this notion is used in the context of equilibrium states to prove existence and uniqueness of a special type of weak Gibbs measure, called
\emph{nonlacunary} Gibbs measure. Therein, they proved that the integrability of the first hyperbolic time implies
nonlacunarity. Here, we slightly generalize this result. Let $\gamma: \mathbb{R}^+ \rightarrow \mathbb{R}^+$ a bijection. We say that a increasing sequence $(a_k)_{k \in \mathbb{N}}$ is $\gamma$-nonlacunary, if 

$$
\lim\limits_{k\rightarrow \infty} \frac{a_{k+1}-a_k}{\gamma(a_k)} =0.
$$      

In particular, if $\gamma$ is the identity, a $\gamma$-nonlacunary sequence is just a nonlacunary sequence. 

\begin{lemma}\label{l.nonlacunargeneral}
If $\gamma^{-1}$ denotes the inverse function of $\gamma$, assume that for every $r>0$ the function  $\gamma^{-1}\circ(r n_1) $ is $\mu$-integrable. Then, for $\mu$-a.e. $x \in M$ the sequence $n_j(\cdot)$ of its hyperbolic times is $\gamma$-nonlacunary.
\end{lemma}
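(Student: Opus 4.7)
The plan is to work with the induced transformation $T\colon B\to B$, where $B=H^\theta(c,\delta,g)$ and $T(x)=g^{n_1(x)}(x)$, and then appeal to Birkhoff's theorem twice. By Remark~\ref{r.1}, the hyperbolic times of $x$ satisfy $n_k(x)=\sum_{j=0}^{k-1}n_1(T^j x)$, so the consecutive gap is
$$
n_{k+1}(x)-n_k(x)=n_1(T^k(x)),
$$
and $\gamma$-nonlacunarity amounts to showing that $n_1(T^k(x))/\gamma(n_k(x))\to 0$ for $\mu$-a.e.\ $x$.

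First I would record two consequences of ergodicity. The induced measure $\mu_B:=\mu|_B/\mu(B)$ is $T$-invariant and ergodic (standard, since $B$ has full $\mu$-measure and $\mu$ is ergodic under $g$). Using that $n_1$ is integrable by Lemma~\ref{lem:01}, Birkhoff's theorem gives
$$
\frac{n_k(x)}{k}\longrightarrow \bar n:=\int n_1\,d\mu_B>0
$$
for $\mu_B$-a.e.\ $x$, so $n_k(x)$ grows linearly in $k$. Next, fix $\epsilon>0$ and consider $\phi_\epsilon(x):=\gamma^{-1}(n_1(x)/\epsilon)$. By hypothesis (with $r=1/\epsilon$), $\phi_\epsilon$ is $\mu$-integrable, hence $\mu_B$-integrable. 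The standard fact that any integrable observable $\phi$ on an ergodic system satisfies $\phi(T^k x)/k\to 0$ a.s.---immediate from $\phi(T^k x)/k=S_{k+1}\phi/k-S_k\phi/k$ and Birkhoff---yields $\phi_\epsilon(T^k x)/k\to 0$, and combining with the growth of $n_k$ gives
$$
\frac{\gamma^{-1}\!\left(n_1(T^k x)/\epsilon\right)}{n_k(x)}\longrightarrow 0.
$$
Since $\gamma$ (and therefore $\gamma^{-1}$) is an increasing bijection of $\mathbb{R}^+$, this is equivalent to $n_1(T^k x)<\epsilon\,\gamma(n_k(x))$ for all sufficiently large $k$. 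Intersecting the resulting full-measure sets over a countable sequence $\epsilon_m\downarrow 0$ produces a full-measure set on which $n_1(T^k x)/\gamma(n_k(x))\to 0$, i.e.\ the sequence $(n_k(x))$ is $\gamma$-nonlacunary.

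The only delicate point is the quantifier exchange: one needs the hypothesis for every $r>0$ precisely so that the countable union over $\epsilon_m$ retains full measure. The monotonicity of $\gamma$, which is tacit in the statement, is what allows us to transfer the Birkhoff conclusion for $\gamma^{-1}$ into the desired asymptotic for $\gamma$; without it, one would have to impose some regularity hypothesis on $\gamma$ separately. Apart from that, every step is an application of Birkhoff's theorem, either to $n_1$ or to $\gamma^{-1}(r\,n_1)$, on the induced tower.
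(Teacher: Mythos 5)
Your proof hinges on the claim that the normalized measure $\mu_B$ is invariant under the jump transformation $T(x)=g^{n_1(x)}(x)$, justified by ``standard, since $B$ has full $\mu$-measure and $\mu$ is ergodic under $g$.'' This is not standard, and I believe it is false in general. The Kac-type invariance result applies to \emph{first return} maps, but $T$ is not the first return map to any natural set here: $n_1(x)$ is the first \emph{hyperbolic time}, not the first return time to $B=H^\theta(c,\delta,g)$ (which has full measure, so the first return time is trivially $1$ a.e.). The nesting structure of hyperbolic times (Remark~\ref{r.1}) gives the cocycle identity $n_k(x)=\sum_{j<k}n_1(T^j x)$, but not a tiling of the space by ``return towers'' of the kind that underlies Kac's theorem. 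Both of your applications of Birkhoff's theorem --- the one giving $n_k(x)/k\to\bar n$, and the one giving $\phi_\epsilon(T^k x)/k\to 0$ --- are for the $T$-system and therefore rest on this unjustified invariance.

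The argument can be repaired without any reference to $T$-invariance. You only need $n_k(x)\to\infty$, which is trivial since $n_k\geq k$; the Birkhoff step $n_k/k\to\bar n$ is unnecessary. For the key estimate, apply Birkhoff to the $g$-system: $\mu$ is $g$-invariant and ergodic, $\phi_\epsilon$ is $\mu$-integrable, so $\phi_\epsilon(g^n x)/n\to 0$ $\mu$-a.e. Then set $n=n_k(x)$ and use $g^{n_k(x)}x=T^k(x)$ to obtain $\gamma^{-1}\bigl(n_1(T^k x)/\epsilon\bigr)/n_k(x)\to 0$; the rest of your argument (monotonicity of $\gamma$ and intersecting over a countable sequence $\epsilon_m\downarrow 0$) then goes through unchanged. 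With this fix your proof is correct, though it is genuinely different in route from the paper's, which bypasses any induced system and runs a direct Borel--Cantelli argument on the events $L_r(n)=\{n_1\geq r\gamma(n)\}$, using $g$-invariance to control $\sum_n\mu(g^{-n}L_r(n))$ via the integrability hypothesis. The two are morally the same --- ``$\phi(g^n x)/n\to 0$ for integrable $\phi$'' is itself provable by Borel--Cantelli --- but the paper's version makes the mechanism explicit and avoids the pitfall you fell into.
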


Before prove this Lemma, we remark that  if we know \emph{a priori} that $\mu \big(H_n(c,\delta,f)\big)$ decays in particular way, then we may take $\gamma(t)$ growing less than $t$ at the infinity. For example, $\mu \big(H_n(c,\delta,f)\big)$ decays expoentially,  the hypothesis of the lemma above is easily satisfied for any function of the form $\gamma(t)=t^p$, where $p>0$. 

\begin{proof}

Let $D$ be the set of points for which the sequence $n_j(\cdot)$
fails to be $\gamma$-nonlacunary. For each $r>0$, define $L_r(n)
= \{x\in M: n_1(x)\geq r \gamma(n) \}$. If $x\in D$ then there exists
a rational number $r>0$, and there are infinitely many values
of $i$ such that $n_{i+1}(x)-n_i(x)\ge r\gamma(n_i(x))$. By
Remark~\ref{r.1}, the latter implies that
$$
n_1(f^{n_i}(x))= n_{i+1}(x)-n_i(x) \ge r \gamma(n_i(x)).
$$
So, there are arbitrarily large values of $n$ such that $x\in
f^{-n}(L_r(n))$. In other words, $D$ is contained in the set
$$
L= \bigcup_{r\in \mathbb{Q}\cap (0,+\infty)}
   \bigcap_{k=0}^{\infty} \bigcup_{n\geq k} f^{-n}(L_r(n)).
$$
Since $\mu$ is invariant, we have
$\mu(f^{-n}(L_r(n)))=\mu(L_r(n))$ for all $n$. Then
$$
\sum_{n=1}^{\infty} \mu(L_r(n))
 = \sum_{n=1}^\infty\sum_{n_1\ge r \gamma(n)}\mu(H_{n_1})
 = \sum_{m=1}^\infty\sum_{n=1}^{[\gamma^{-1}(m/r)]}\mu(H_{m})
 \leq \sum_{m=1}^\infty \gamma^{-1}(m/r) \mu(H_{m}).
$$
Thus, using the hypothesis that $\gamma^{-1}(n_1(\cdot)/r)$ is integrable,
$$
\sum_{n=1}^{\infty} \mu(L_r(n))
 \leq \sum_{m=1}^\infty  \gamma^{-1}(m/r) \mu(H_{m})
 = \int \gamma^{-1}(n_1(x)/r) d\mu(x) < \infty.
$$
By the Borel-Cantelli lemma, this implies that $L$ has measure
zero. It follows that $\mu(D)=\mu(L)=0$, as claimed.

\end{proof}

\begin{corollary}\label{c.nonlacunar} If $\mu$ is a invariant expanding ergodic measure, then the sequence of hyperbolic times is nonlacunary.
\end{corollary}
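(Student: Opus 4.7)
The plan is to derive this as an immediate specialization of Lemma~\ref{l.nonlacunargeneral}, choosing $\gamma$ to be the identity bijection on $\mathbb{R}^+$. With this choice, the notion of $\gamma$-nonlacunary coincides with the standard nonlacunary condition $\lim_k n_{k+1}/n_k = 1$ (equivalently, $\lim_k (n_{k+1}-n_k)/n_k = 0$), so Lemma~\ref{l.nonlacunargeneral} delivers exactly the conclusion of the corollary, provided its integrability hypothesis is verified.

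The hypothesis to check is that $\gamma^{-1}\circ (r\,n_1) = r\,n_1$ is $\mu$-integrable for every $r>0$. This reduces to the single statement that $n_1$ itself lies in $L^1(\mu)$, and that is precisely what we already established in Lemma~\ref{lem:01} (using Lemma~\ref{l.hyptimes}, Remark~\ref{r.1}, and Pinheiro's Lemma~\ref{l.Pinheiro}, which gave $\int n_1\,d\mu \leq \theta^{-1} < +\infty$). Thus the hypothesis of Lemma~\ref{l.nonlacunargeneral} is met for every $r>0$ at once, and the conclusion follows for $\mu$-almost every $x$.

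There is no genuine obstacle here; the content of the corollary has already been absorbed into the general $\gamma$-version of the lemma, and the only thing to notice is that $\gamma = \mathrm{id}$ gives back the classical definition of nonlacunarity. The point of the generalization in Lemma~\ref{l.nonlacunargeneral} is precisely to allow one to trade exponential tail bounds on $\mu(H_n)$ for slower-growing $\gamma$; when we have no a priori tail estimate we simply take $\gamma(t)=t$ and invoke the $L^1$ bound on $n_1$ that comes from Pinheiro's ergodic estimate.
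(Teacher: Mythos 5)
Your proof is correct and is essentially identical to the paper's own: apply Lemma~\ref{l.nonlacunargeneral} with $\gamma = \mathrm{id}$, verifying the integrability hypothesis via Lemma~\ref{lem:01}. The extra remarks on why $\gamma = \mathrm{id}$ recovers the classical definition and why the hypothesis reduces to $n_1 \in L^1(\mu)$ are just spelled-out versions of what the paper leaves implicit.
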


\begin{proof} 
Observe that $n_1$ is integrable by Lemma~\ref{lem:01}. To finish the proof, just put $\gamma(t)=t$ in the Lemma~\ref{l.nonlacunargeneral}.
\end{proof}

\begin{lemma}

For $\mu$-a.e. $x \in M$ we have that given $\epsilon > 0$ small enough, if
${B}_\epsilon(n,x)$ is a dynamical ball at $x$ for $f$, then
there exists a periodic point in ${B}_n(x,\epsilon)$, with
period less than $n + K(n,x,\epsilon)$ where
\[
\limsup_{n \to \infty} \frac{K(n,x, \epsilon)}{n} = 0.
\]
\end{lemma}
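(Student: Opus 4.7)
The plan is to combine the nonlacunarity of hyperbolic times (Corollary~\ref{c.nonlacunar}) with strong transitivity of $f$ on $\supp(\mu)$, and to produce the periodic point via a Banach fixed-point argument inside a hyperbolic pre-ball of $g = f^{\ell}$. Throughout, let $L$ denote the global Lipschitz constant of $f$ coming from the uniform bound $\|Df\| < C$ in part~(4) of the non-flat critical set definition, and fix the parameters $c,\delta,\theta,\ell$ provided by Lemma~\ref{l.hyptimes}. Given $\epsilon>0$ I would set $\delta' := \min(\delta,\,\epsilon/L^{\ell})$, and first extract a uniform strong-transitivity constant: for each $z \in \supp(\mu)$ there is a finite $N(z)$ with $\supp(\mu) \subset \bigcup_{j=0}^{N(z)} f^j(B_{\delta'/2}(z))$, and a finite subcover $\{B_{\delta'/2}(z_i)\}_{i=1}^{k}$ of the compact set $\supp(\mu)$ yields $N_0 := \max_i N(z_i) < \infty$. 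Using the inclusion $B_{\delta'/2}(z_i) \subset B_{\delta'}(z)$ whenever $z \in B_{\delta'/2}(z_i)$, this upgrades to $\supp(\mu) \subset \bigcup_{j=0}^{N_0} f^j(B_{\delta'}(z))$ for \emph{every} $z \in \supp(\mu)$.

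Given $n$, I would use the nonlacunary sequence $n_1(x) < n_2(x) < \cdots$ of hyperbolic times for $g$ at $x$ to select the smallest index $i$ such that $m := n_i(x)$ satisfies $m\ell \geq n$. Since $n_{i-1}(x)\ell < n$, nonlacunarity yields $m\ell - n \leq \ell\,(n_i(x) - n_{i-1}(x)) = o(n)$. I work inside the hyperbolic pre-ball $V'_m(x) := g^{-m}(B_{\delta'}(g^m(x)))$: Definition~\ref{def:hiperbolic_times} gives $d(g^k(y),g^k(x)) \leq e^{-2c(m-k)}\delta' \leq \delta'$ for $y \in V'_m(x)$ and $0 \leq k \leq m$. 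Combining this with the uniform Lipschitz bound $L$ on intermediate $f$-iterates within each $\ell$-block, one obtains $V'_m(x) \subset B_{m\ell}(x, L^{\ell}\delta') \subset B_{m\ell}(x,\epsilon) \subset B_n(x,\epsilon)$.

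By the uniform strong transitivity applied to $B_{\delta'}(g^m(x))$, I pick $j_0 \leq N_0$ and $y \in B_{\delta'}(g^m(x))$ with $f^{j_0}(y) = x$, and (by a genericity argument) require that $y, f(y),\ldots,f^{j_0-1}(y)$ avoid the critical set $\mathcal{C}$. Then a local inverse branch $\psi$ of $f^{j_0}$ is well-defined on a neighborhood of $x$ with $\psi(x) = y$ and Lipschitz constant $\leq L^{N_0}$. For $m$ large the pre-ball $V'_m(x)$ lies in the domain of $\psi$ and has $\psi(V'_m(x)) \subset B_{\delta'}(g^m(x))$, so $\Psi := g^{-m} \circ \psi$ is a self-map of $V'_m(x)$ with Lipschitz constant $\leq L^{N_0}\,e^{-2cm} < 1$. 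Banach's fixed-point theorem produces $z^* \in V'_m(x)$ with $g^m(z^*) = \psi(z^*)$, equivalently $f^{m\ell + j_0}(z^*) = z^*$: a periodic point of $f$ of period at most $m\ell + j_0 \leq n + (m\ell - n) + N_0 = n + K(n,x,\epsilon)$ with $K(n,x,\epsilon)/n \to 0$, as desired.

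The hard parts I expect are: establishing the uniform strong-transitivity constant $N_0$ by the compactness argument above; guaranteeing the existence of a good preimage $y$ admitting a local inverse branch of $f^{j_0}$ — the concern being that every preimage of $x$ inside $B_{\delta'}(g^m(x))$ might accidentally land on $\bigcup_{k<N_0} f^{-k}(\mathcal{C})$, which one must rule out by adjusting the reference points $z_i$ (or $\delta'$) so that the preimages of $x$ within $B_{\delta'}(g^m(x))$ are not all confined to this lower-dimensional set; and carefully keeping track of the intermediate $f$-iterates within each $\ell$-block when verifying $V'_m(x) \subset B_n(x,\epsilon)$, which is handled by the uniform Lipschitz constant $L$ from condition~(4).
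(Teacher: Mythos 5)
Your overall strategy matches the paper's: pass to $g = f^{\ell}$, pick a hyperbolic time $m = n_i(x)$ just past $n/\ell$, use the hyperbolic pre-ball $V'_m(x) \subset B_n(x,\epsilon)$, use strong transitivity to pull the orbit back into the pre-ball, and close up with a fixed-point theorem, with nonlacunarity giving $K(n,x,\epsilon)=o(n)$. The compactness argument you give for the uniform strong-transitivity constant $N_0$ is in fact a genuine improvement: the paper's one-line invocation (``we can find $r>0$ and $N(\epsilon)$ \dots'') has precisely the gap you close, since the naive application of strong transitivity to $B_{\delta'}(g^m(x))$ would give an $N$ depending on $m$.

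There is, however, a concrete error in your fixed-point step. You claim the local inverse branch $\psi$ of $f^{j_0}$ has Lipschitz constant $\leq L^{N_0}$, where $L$ comes from the uniform bound $\|Df\|<C$ in condition~(4). But $\psi$ is an \emph{inverse} branch, and its Lipschitz constant is controlled by $\|Df^{-1}\|$ along the orbit $y,\dots,f^{j_0-1}(y)$, not by $\|Df\|$. Near $\mathcal{C}$, condition~(1) gives $\|Df(x)^{-1}\|\geq B^{-1}\dist(x,\mathcal{C})^{\beta}$ with the only upper bound $\|Df(x)^{-1}\|\leq B\,\dist(x,\mathcal{C})^{-\beta}$, which blows up; so $\psi$ need not be $L^{N_0}$-Lipschitz, and the Banach contraction estimate $L^{N_0}e^{-2cm}<1$ does not follow as written. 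This is fixable: once you have a continuous self-map $\Psi=g^{-m}\circ\psi$ of the closed pre-ball $\overline{V'_m(x)}$ (which is homeomorphic to a closed ball), Brouwer's theorem gives the fixed point with no Lipschitz estimate at all — and indeed the paper invokes Brouwer rather than Banach. But to make even the Brouwer argument rigorous one still needs what you flag at the end: the preimage $y=y_m$ must admit an inverse branch whose domain is large enough to contain $V'_m(x)$, uniformly as $m\to\infty$; if $y_m$ creeps toward $\bigcup_{k<j_0}f^{-k}(\mathcal{C})$ the domain of $\psi$ may shrink faster than $V'_m(x)$ does. Your proposed remedy (``adjust the reference points $z_i$ or $\delta'$'') is not an argument, and the paper is also silent here, so this remains the genuine unresolved point in both accounts.
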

\begin{proof}

Let us consider as before $g = f^\ell$ with $\ell$ chosen in such
a way that $\mu$-almost all $x\in M$ has a nonlacunary infinite
sequence $(n_k)_k$ of $g$-hyperbolic times (Lemma \ref{l.hyptimes} and
Corollary~\ref{c.nonlacunar}). We can assume that $x$ is such a point on
$\supp(\mu)$,  the support of $\mu$. Let $\epsilon > 0$ and $n
\geq 1$ be fixed with $\epsilon<\delta$ and let $k$ be such that $\ell n_k < n \leq \ell n_{k+1}$. 

From the uniform continuity of $f$ it follows that
there exists $\gamma=\gamma(\epsilon)<\epsilon$ such that for all $y\in M$
and $0\leq k\leq \ell$,
\begin{equation}\label{eq.gamma}
f^k\big(B(y,\gamma)\big)\subset B(f^k(y),\epsilon).
\end{equation}

As a consequence we have
\begin{equation} \label{eq.bolasdinamicas}
B_{g,n_{k+1}}(x,\gamma) \subset B_{f,n}(x,\epsilon).
\end{equation}

Let
$V(x)=g^{-n_{k+1}}\big(B(g^{n_{k+1}}(x),\gamma)\big)$
be the $g$-hyperbolic pre-ball around $x$  of length $n_{k+1}$.
Since
$n_{k+1}$ is a $g$-hyperbolic time for
$x$ and $g^{-j}$ is a contraction on
$B(g^{n_{k+1}}(x),\gamma)$
for all $1\leq j\leq n_{k+1}$,
it follows that $g^{n_{k+1}}(V(x))= B(g^{n_{k+1}}(x),\gamma)$ and that
$V(x) = B_{g,n_{k+1}}(x,\gamma)$.

As $f$ is strongly transitive on the support of $\mu$, we can find
$r> 0$ and $N(\epsilon) \in \NN$ such that for all $y \in \supp(\mu)$ we have
$B(y,r) \subset f^j(B(x,\gamma))$ for some $j \leq N(\epsilon)$.

Without loss of generality, we may assume that $n$ satisfies
$e^{-4cn}\gamma < r$ which gives immediately $V(x)=B_{g,n_{k+1}}(x,\gamma)  \subset B(x,r)$. Thus,  we conclude that
\[
V(x) \subset f^j(B(f^{\ell n_{k+1}}(x),\gamma)) = f^j\big(f^{\ell n_{k+1}}(V(x))\big)
\] for some $j \leq N$. Put $K(n,x,\epsilon) =  \ell n_{k+1} + j - n$. By Brower's Fixed Point Theorem, we have that

\[
f^{n+K(n,x,\epsilon)}: V(x) \rightarrow
f^{n+K(n,x,\epsilon)}(V(x)),
\]
has a fixed point. Using the nonlacunarity of $(n_k)_k$ we get that
\[
 \frac{K(n,x,\epsilon)}{n} \leq
 \frac{\ell n_{k+1} - n + N(\epsilon)}{n} \leq
 \frac{\ell n_{k+1} - \ell n_k + N(\epsilon)}{n_k} \to 0 \;\;\;\text{ when } k \to \infty.
\] This finish the proof of this lemma.

\end{proof}

%
 
 \section{Proof of Theorem~\ref{th:t1}}

In this section we prove Theorem~\ref{th:t1}. We need to prove that given an ergodic
expanding measure $\mu$,  $\epsilon>0$,  $\eta>0$ small enough and
$q$ a $\eta$-slowly varying function, for $\mu$-almost every point
$x \in M$ we have that the nonuniform dynamical ball
$\widetilde{B}_n(x,\epsilon,q)$   has a periodic point with period
less than $n + K(n,x,\epsilon,\eta)$, where
\[
\lim_{\eta\to 0} \limsup_{n \to \infty}
\frac{K(n,x,\epsilon,\eta)}{n} = 0.
\]

\begin{proof}[Proof of Theorem~\ref{th:t1}]

By Lemma \ref{l.hyptimes} and Corollary~\ref{c.nonlacunar}, there exists $\ell$ such that
almost every $x \in \supp(\mu)$ has infinitely many hyperbolic
times $n_1<n_2<\dots$ for $g=f^\ell$ and that $(n_k)$ is nonlacunary sequence.
Assume that $\eta<c$ and $\epsilon < \delta$. Denote by $\widetilde{B}_n(x,\epsilon,q)$ the nonuniform $(n,\epsilon, q)$-dynamical ball with respect to $f$ and by
$\widetilde{B}_n^\ell(x,\epsilon,q)$ the $ (n,\epsilon, q)$-dynamical
ball with respect to $f^\ell$. 

Observe that since  $\|Df\|$ uniformly is bounded from above and, by the Mean Value inequality, given $\epsilon>0$
we may choose $\alpha=\alpha(\epsilon)<\epsilon$ such that given $y\in M$
and $0\leq k\leq \ell$,
$$
f^k\big(B(y,r \alpha)\big)\subset B(f^k(y),r \alpha),
$$ for every $r>0$. As a consequence, given $m\in \mathbb{N}$, we have that
\begin{equation}\label{eq.boladinamica2}
\widetilde{B}_{\ell m}(x,\alpha(\epsilon),q) \subset \widetilde{B}_m^\ell(x,\epsilon,q).
\end{equation}

 Given $n\in \mathbb{N}$ big enough, take $\ell n_i\geq
n>\ell n_{i-1}$ two consecutive hyperbolic times for $x$. By the
definition of $\eta$-slowly varying function, we have that
$q(g^n(x))\leq e^{\ell n\eta}q(x)$. Observing that $g_x^{-j}$ is a
contraction for $1\leq j \leq n_i$, we have that
$$
 B(g^{n_i}(x), e^{-2\ell n_i \eta}q(x)^{-2}\epsilon)\subset
 g^{n_{i}}(\widetilde{B}_{n_i}^\ell (x,\epsilon,q)).
$$

Now, we choose $n_{i+s}$ a hyperbolic time for $x$ such that
\begin{equation}\label{eq.n}
n_{i+s-1}<\frac{c+\eta}{c}n_i \quad \text{  and } \quad
 n_{i+s}\geq \frac{c+\eta}{c}n_i.
\end{equation}

Observe that the choice of $n_{i+s}$ implies that
$e^{-c(n_{i+s}-n_i)}\epsilon \leq e^{-n_i\eta}\epsilon$. Since
$$
B_{n_{i+s}-n_i}^\ell(g^{n_i}(x), \epsilon) \subset
B(g^{n_i}(x), e^{-2c(n_{i+s}-n_i)}q(x)^{-2}\epsilon) \subset B(g^{n_i}(x)),e^{-2n_i
\eta} q(x)^{-2}\epsilon),
$$
we have that
$$
g^{n_{i+s}-n_i}\big(B_{n_{i+s}-n_i}^\ell (g^{n_i}(x))\big) =
B(g^{n_{i+s}}(x), q(x)^{-2}\epsilon) \subset
g^{n_{i+s}}\big(\widetilde{B}_{n_i}^\ell (x,\epsilon,q) \big).
$$

Choosing $\alpha$ as in Equation~(\ref{eq.boladinamica2}) and using that $\ell n_i>n$, we have that 
$$
B(f^{\ell n_{i+s}}(x), q(x)^{-2}\epsilon) \subset f^{\ell n_{i+s}}\big(\widetilde{B}_{n} (x,\alpha,q) \big).
$$

Since $f$ is strongly transitive on $\supp(\mu)$, we have that for
$n$ big enough and some $0\leq J \leq N(q(x)^{-2}\epsilon)$, the map $f^{\ell n_{i+s}+ J}$
restricted to $\widetilde{B}_{n}(x,\alpha,q)$ is onto
$\widetilde{B}_{n}(x,\alpha,q) $. Thus, by Brower's Fixed Point Theorem, $f$ has a periodic point 
on $\widetilde{B}_{n}(x,\alpha,q) \subset \widetilde{B}_{n}(x,\epsilon,q).$ Define
$$
K(n,x,\epsilon, \eta)=\ell n_{i+s}+J-n,
$$ then  $f$ has a periodic point with period at most $n+K(n,x,\epsilon,
  \eta)$.
To finish the proof, we need to check that
 $$
\lim_{\eta\to 0} \limsup_{n \to \infty}
\frac{K(n,x,\epsilon,\eta)}{n} = 0.
$$

In fact, since $(n_k)$ is a nonlacunary sequence, we have that
$n_{i}/n_{i-1}$ and $n_{i+s}/n_{i+s-1}$ converge to $1$ when $n$
goes to infinity (observe that $i$  goes to infinity when $n$ goes
to infinity). By the left-hand side of Equation~(\ref{eq.n}) above:
$$
\frac{n_{i+s}}{n_i} \leq \frac{n_{i+s}}{n_{i+s-1}}
\frac{c+\eta}{c}.
$$
Observing that since $n_{i-1} \leq n \leq n_i$ and
$n_i/n_{i-1}\rightarrow 1$ when $n$ goes to infinity, we have that
$n_i/n \rightarrow 1$ when $n$ goes to infinity. Using this and
the equation above, we conclude that
$$
\lim_{n\rightarrow \infty} \frac{n_{i+s}}{n} \leq
\frac{c+\eta}{c}.
$$
In particular,
$$
\lim_{\eta\to 0} \limsup_{n \to \infty}
\frac{K(n,x,\epsilon,\eta)}{n} = \lim_{\eta\to 0} \limsup_{n \to
\infty} \frac{n_{i+s}+J-n}{n} \leq \lim_{\eta \rightarrow 0}
\frac{c+\eta}{c} - 1= 0,
$$ since $0\leq J\leq N(q(x)^{-2}\epsilon)$, and so the Theorem~\ref{th:t1} is proved.
\end{proof}

\bigskip

\noindent {\bf Acknowledgements.} Part of this paper was written during a visit to IMPA and IBILCE/UNESP. The author gratefully acknowledges the hospitality of these institutions and financial support of CNPq, FAPERJ, FAPEAL and  FAPESP. The author is also grateful  to N. Muniz for several useful discussions and to F. Ledrappier, A. Katok, W. Sun and M. Viana for suggestions.


\def\polhk#1{\setbox0=\hbox{#1}{\ooalign{\hidewidth \lower1.5ex\hbox{`}\hidewidth\crcr\unhbox0}}}

\bibliographystyle{plain}
\bibliography{bib06072010}

\bigskip

\flushleft

{\bf Krerley Oliveira} (krerley\@@gmail.com)\\
Instituto de Matem\'{a}tica, UFAL \\
57072-090 Maceió, AL, Brazil.

\bigskip

\flushleft
%

\end{document}